\DeclareMathOperator{\ini}{in}
\DeclareMathOperator{\reg}{reg}
\renewcommand{\ge}{\geqslant}
\renewcommand{\le}{\leqslant}
\newcommand{\<}{\left\langle}
\renewcommand{\>}{\right\rangle}
\theoremstyle{plain}
\newtheorem{theorem}{Theorem}[section]
\newtheorem{lemma}[theorem]{Lemma}
\newtheorem{proposition}[theorem]{Proposition}
\newtheorem{corollary}[theorem]{Corollary}
\theoremstyle{definition}
\newtheorem{example}[theorem]{Example}
\newtheorem{question}[theorem]{Question}
\theoremstyle{remark}
\newtheorem{remark}[theorem]{Remark}
\numberwithin{equation}{theorem}
\newcommand{\mycomment}[1]{}
\title[Products of monomial ideals related to maximal minors]
{Linear resolution of products of monomial ideals related to maximal minors}
\author[A.~Banerjee]{Arindam Banerjee}
\address{Ramakrishna Mission Vivekenanda Educational and Research Institute, Belur, West
Bengal, India}
\email{123.arindam@gmail.com}
\author[D.~Ghosh]{Dipankar Ghosh}
\address{Department of Mathematics, Indian Institute of Technology Kharagpur, West Bengal - 721302, India}
\email{dipankar@maths.iitkgp.ac.in, dipug23@gmail.com}
\author{S Selvaraja}
\address{Department of Mathematics, Indian Institute of Technology Bhubaneswar, Bhubaneswar, 752050, India}
\email{selvas@iitbbs.ac.in, selva.y2s@gmail.com}
\date{December 15, 2022}
\subjclass[2010]{Primary 13D02, 15A15}
\keywords{Monomial ideals; Determinantal ideals; Linear free resolution; Castelnuovo-Mumford regularity}
\begin{document}

\begin{abstract}
	Let $ X $ be an $ m \times n $ matrix of distinct indeterminates over a field $ K $, where $ m \le n $. Set the polynomial ring 
	$ K[X] := K[X_{ij} : 1 \le i \le m, 1 \le j \le n] $. Let $ 1 \le k < l \le n $ be such that $ l - k + 1 \ge m $. 
	Consider the submatrix $ Y_{kl} $ of consecutive columns of $ X $ from $ k $th column to $ l $th column. Let $ J_{kl} $ be the ideal 
	generated by `diagonal monomials' of all $ m \times m $ submatrices of $ Y_{kl} $, where the diagonal monomial of a square matrix means product of
	its main diagonal entries. We show that $ J_{k_1 l_1} J_{k_2 l_2} \cdots J_{k_s l_s} $ has a linear free resolution, where 
	$ k_1 \le k_2 \le \cdots \le k_s $ and $ l_1 \le l_2 \le \cdots \le l_s $. This result is a variation of a theorem due to Bruns and Conca.
	Moreover, our proof is self-contained, elementary and combinatorial.
\end{abstract}
\maketitle

\section{Introduction and Notation}
The study of Castelnuovo-Mumford regularity (or simply, regularity) of powers and products of ideals in polynomial 
rings has been a central problem in commutative algebra and algebraic geometry. One important result in this direction
was given by Cutkosky, Herzog and Trung \cite{CHT}, and independently by Kodiyalam \cite{vijay}. They proved that if $I$ is 
a homogeneous ideal of $K[x_1,\ldots, x_d]$, then the regularity of $I^s$ is asymptotically a linear function in $s$. 
This linear function behaves in the simplest possible way when $I$ is generated  in degree $m$, and all its powers have
linear free resolution, i.e., $\reg(I^s) =m \cdot s$ for all $s$.  
In general, powers of an ideal with a linear free resolution need not have linear free resolution (see \cite[Counterexample~1.10]{NP13}). 
Recently, a number of authors have been interested in classifying or identifying families of ideals whose powers and products have linear free 
resolution. For example, Fr{\"o}berg \cite{Fro90} characterized all squarefree monomial ideals generated by quadratic monomials, which have linear 
free resolution. Later, Herzog, Hibi and Zheng \cite{HHZ} proved that a monomial ideal $I$ generated in degree $ 2 $ has a linear free resolution 
if and only if every power of $ I $ has a linear free resolution. If $ I $ is a polymatroidal ideal, then all powers have linear free 
resolution, \cite[Corollary~12.6.4]{Herzog'sBook}. It was proved by Akin, Buchsbaum and Weyman \cite[Theorem~5.4]{ABW81}
that all powers of determinantal ideals of maximal minors of a generic matrix have linear free resolution.  Recently, in \cite[Theorem~5.1]{Rai18}, 
Raicu classified the determinantal ideals of a generic matrix with all powers having linear free resolution. We are interested to find a family of 
monomial ideals (related to determinantal ideals) whose powers and products have linear free resolution.

The study of determinantal ideals, rings and varieties is a classical topic in
commutative algebra, algebraic geometry and invariant theory (see \cite{BV88} and \cite{BCRV}). One important method to study determinantal ideals is to understand their initial ideals via Gr\"{o}bner basis. Throughout the article, let $ K $ be a field, and $ X $ be an $ m \times n $ matrix of distinct indeterminates over $ K $, where $ m \le n $. Set the polynomial ring $ K[X] := K[X_{ij} : 1 \le i \le m, 1 \le j \le n] $. Let $ I_t(X) $ be the ideal generated by all $ t \times t $ minors of $ X $.
The homological properties of these ideals as well as Gr\"{o}bner bases and initial ideals with respect to diagonal (or antidiagonal) monomial orders 
are well understood. 
Among these ideals of minors the best-behaved is the ideal of maximal minors, namely the ideal $ I_m(X) $.
  Let $ \tau $ be a diagonal monomial order. For example, $ \tau $ can be the lexicographic term order on $ K[X] $ induced by the order
 \[
 X_{11} > X_{12} > \cdots > X_{1n} > X_{21} > X_{22} > \cdots > X_{2n} > \cdots > X_{m1} > \cdots > X_{mn}.
 \]
It is well known that all the $ m \times m $ minors of $ X $ form a Gr\"{o}bner basis of $ I_m(X) $ with respect to $ \tau $.
 Indeed, it is proved in \cite[Theorem 0]{BZ93} and \cite[Corollary 7.6]{SZ93}, and generalized in 
 \cite[Theorem 2.1]{CNG15} that the maximal minors of $ X $ form a universal Gr\"{o}bner basis 
 (i.e., a Gr\"{o}bner basis with respect to every monomial order).  
 Regarding powers of $ I_m(X) $, in \cite[Theorem~2.1]{Con97}, Conca proved that the natural generators of 
 $ I_m(X)^s $ form a Gr\"{o}bner basis with respect to $ \tau $, and $ \ini_{\tau}(I_m(X)^s) = 
 \ini_{\tau}(I_m(X))^s $ for every $ s \ge 1 $.
See also \cite[Theorem~3.10]{BC98} for a similar result on arbitrary determinantal ideals $I_t(X)$. But, for $ m > 2 $ and $ s > 1 $, the natural generators of $ I_m(X)^s $ do not necessarily form a universal Gr\"{o}bner basis, due to Speyer and Sturmfels \cite[Corollary 5.6]{SS04}. In other words, there are monomial orders $ < $ such that $ \ini_{<}(I_m(X)^s) $ is strictly larger than $ \ini_{<}(I_m(X))^s $.

The asymptotic behavior of regularity also has been studied for powers of more than one ideals in 
\cite{BCH13}, \cite{BCon17} and \cite{Ghosh16}.
For determinantal ideals, Conca and Herzog in \cite[Theorem~6.1]{CH03} showed that products of ideals of minors of a Hankel matrix have linear free 
resolution.
Berget, Bruns and Conca \cite[Theorem~4.7]{BBC15} proved an extension of \cite[Theorem~5.4]{ABW81} to arbitrary products of the ideals $ I_t(X_t) $, 
where $ X_t $ is the submatrix of the first $ t $ rows of $ X $. They proved that $I$ and $\ini_{\tau}(I)$ have linear free resolutions, 
where $ I := I_{t_1}(X_{t_1}) \cdots I_{t_w}(X_{t_w}) $. In \cite[Theorems~1.3 and 5.3]{BC17}, Bruns and Conca  generalized this result further to a 
class of ideals that are fixed by the Borel group. They defined the {\it northeast ideals} $ I_t(a) $ of maximal minors: $ I_t(a) $ is generated by 
the $ t \times t $ minors of the $ t \times (n - a + 1) $ northeast submatrix
 \begin{equation*}\label{intro_eq}
	X_t(a) := (X_{ij} : 1 \le i \le t, \; a \le j \le n).  
 \end{equation*}
They proved that, if $ I_{t_1}(a_1), \ldots, I_{t_w}(a_w) $ are northeast ideals of maximal minors, 
and $ I := I_{t_1}(a_1) \cdots I_{t_w}(a_w) $, then $I$ and $\ini_{\tau}(I)$ have linear 
free resolutions. The aim of this article is to prove that products of certain monomial ideals related to maximal minors have linear free resolutions.

Throughout the paper, we work with the following setup.

 \vskip 1mm
 \noindent
 {\bf Setup.} Let $ 1 \le k < l \le n $ be such that $ l - k + 1 \ge m $. We denote the submatrix $ Y_{kl} $ of consecutive columns of $ X $ from $ k $th column to $ l $th column. Denote by $ [c_1,\ldots,c_m] $ the $ m \times m $ minor $ \det(X_{i c_j})_{1\le i, j \le m} $ of $ Y_{kl} $, where $ k \le c_1 < c_2 < \cdots < c_m \le l $. The diagonal of $ [c_1,\ldots,c_m] $ is defined to be the set $ \{ X_{1 c_1}, \ldots, X_{m c_m} \} $. We call the product $ X_{1 c_1} \cdots X_{m c_m} $ as a {\it diagonal monomial} of $ Y_{kl} $. Note that by a diagonal monomial, we always mean product of main diagonal entries of some maximal minor, not any arbitrary minor. Let $ I_{kl} $ be the ideal of $ K[X] $ generated by all $ m \times m $ minors of $ Y_{kl} $. Let $ J_{kl} $ be the ideal of $ K[X] $ generated by all diagonal monomials (of maximal minors) of $ Y_{kl} $; see Example~\ref{exam:submatrix-J-colon}.

 The main result of this paper is the following.
\begin{theorem}\label{thm:J-has-lin-free-reso}
	Let $ 1 \le k_1 \le k_2 \le \cdots \le k_s < n $ and $ 1 < l_1 \le l_2 \le \cdots \le l_s \le n $ be such that $ k_i < l_i $ and $ l_i - k_i + 1 \ge m $. Then, the ideal $ J := J_{k_1 l_1}  J_{k_2 l_2} \cdots J_{k_s l_s} $ has a linear free resolution.
\end{theorem}

In order to prove the main result, we study certain colon ideals related to $ J $. We show that colon ideals take very interesting forms, and they behave nicely for regularity. Using these results and various short exact sequences, we obtain the main result. 
Our proof is self-contained, elementary, and combinatorial.

Motivated by the results \cite[Theorem~2.1]{Con97} and \cite[Theorem~3.10]{BC98} and some computational evidence, we present the following question.

\begin{question}\label{ques}
	Let $ 1 \le k_1 \le k_2 \le \cdots \le k_s < n $ and 
	$ 1 < l_1 \le l_2 \le \cdots \le l_s \le n $ be such that $ k_i < l_i $ and 
	$ l_i - k_i + 1 \ge m $. Set $ I := I_{k_1 l_1}  I_{k_2 l_2} \cdots I_{k_s l_s} $ and $ J := J_{k_1 l_1}  J_{k_2 l_2} \cdots J_{k_s l_s} $. Then, is $ \ini_{\tau}(I) = J$? Moreover, do the natural generators of $ I $ form a Gr\"{o}bner basis with respect to $ \tau $?
\end{question}

Macaulay2 \cite{M2} experiments indicate that Question~\ref{ques} may have affirmative answers. The following examples are computed where it has affirmative answers.

\begin{example}
    Let $X$ be a $ 3 \times 9 $ matrix:
	\[
	\begin{bmatrix}
	X_{11} & X_{12} & X_{13} & X_{14} & X_{15} & X_{16} & X_{17}  & X_{18} & X_{19}\\
	X_{21} & X_{22} & X_{23} & X_{24} & X_{25} & X_{26} & X_{27} & X_{28} & X_{29} \\
	X_{31} & X_{32} & X_{33} & X_{34} & X_{35} & X_{36} & X_{37} & X_{38} & X_{39}
	\end{bmatrix}.
    \]
    It is verified by Macaulay2 that Question~\ref{ques} has affirmative answers if $I$ is equal to any of the following products:
    \begin{align*}
        & I_{14}I_{26}, \quad I_{15}I_{26}, \quad I_{14}^2I_{26}, \quad I_{15}I_{37}^2,\quad I_{14}I_{15}I_{26}, \quad I_{14}I_{26}I_{37}, \quad I_{14}I_{26}I_{59}, \\
        & I_{14}I_{37}I_{59}, \quad I_{14}I_{15}I_{26}I_{37}, \quad I_{14}I_{26}^2I_{37}, \quad I_{14}^2I_{26}I_{59}, \quad I_{14}I_{26}^2I_{59}, \quad I_{14}I_{26}I_{59}^2, \\
        & I_{14}^2I_{37}I_{59}, \quad I_{14}I_{37}^2I_{59}, \quad I_{14}I_{37}I_{59}^2, \quad I_{14}I_{26}I_{37}I_{59}, \quad I_{26}I_{37}^2I_{59}.
    \end{align*}
\end{example}

\begin{example}
    Let $X$ be a $ 2 \times 9 $ matrix:
	\[
	\begin{bmatrix}
	X_{11} & X_{12} & X_{13} & X_{14} & X_{15} & X_{16} & X_{17}  & X_{18} & X_{19}\\
	X_{21} & X_{22} & X_{23} & X_{24} & X_{25} & X_{26} & X_{27} & X_{28} & X_{29}
	\end{bmatrix}.
    \]
    It is verified by Macaulay2 that Question~\ref{ques} has affirmative answers if $I$ is equal to any of the following products:
    \begin{align*}
        & I_{14}I_{26}, \quad I_{15}I_{26}, \quad I_{14}^2I_{26}, \quad I_{15}I_{37}^2,\quad I_{14}I_{15}I_{26}, \quad I_{14}I_{26}I_{37}, \quad I_{14}I_{26}I_{59}, \\
        & I_{14}I_{37}I_{59}, \quad I_{14}I_{15}I_{26}I_{37}, \quad I_{14}I_{26}^2I_{37}, \quad I_{14}I_{26}^2I_{59}, \quad I_{14}I_{26}I_{37}^2, \quad I_{14}I_{15}I_{26}I_{37}I_{59}.
    \end{align*}
\end{example}

\begin{remark}
    If Question~\ref{ques} has affirmative answers, then
    \[
    \ini_{\tau}(I) = \ini_{\tau}(I_{k_1 l_1}) \ini_{\tau}(I_{k_2 l_2}) \cdots \ini_{\tau}( I_{k_s l_s}) = J.
    \]
    In this case, it follows from Theorem~\ref{thm:J-has-lin-free-reso} that $ I $ also has a linear free resolution.
\end{remark}

\begin{remark}\label{rmk:our-thm-is-new}
	Note that Theorem~\ref{thm:J-has-lin-free-reso}
	does not follow from the results of Bruns and Conca \cite[Theorems~1.3 and 5.3]{BC17}. For example, consider the 
	generic matrix $ [X_1 \; X_2 \; X_3] $. Then the product of ideals $ I_{12} := \langle X_1, X_2 \rangle =: J_{12} $
	and $ I_{23} := \langle X_2, X_3 \rangle =: J_{23} $ is $I = \langle X_1X_2, X_1X_3, X_2^2, X_2X_3 \rangle  = J$.
	The ideal $ I $ cannot be written as a product of northeast ideals of maximal minors considered in \cite{BC17}.
\end{remark}

\section{Main Result}

In order to prove Theorem~\ref{thm:J-has-lin-free-reso}, we first show that the monomial ideal $ J_{kl} $ has a linear free resolution by proving that the ideal has linear quotients. Recall that an ideal $ I $ is said to have linear quotients if there exists an ordered set of generators $ f_1,\ldots,f_r $ of $ I $ such that $ (\langle f_1,\ldots,f_u \rangle : f_{u+1}) $ is generated by linear forms for every $ 1 \le u \le r - 1 $. Before proving that $ J_{kl} $ has linear quotients, we illustrate an example for reader's convenience.

\begin{example}\label{exam:submatrix-J-colon}
	Let $X$ be a
	$ 3 \times 8 $ matrix:
	\[
	\begin{bmatrix}
	X_{11} & {\bf X_{12}} & {\bf X_{13}} & {\bf X_{14}} & {\bf X_{15}} & {\bf X_{16}} & X_{17}  & X_{18} \\
	X_{21} & {\bf X_{22}} & {\bf X_{23}} & {\bf X_{24}} & {\bf X_{25} }& {\bf X_{26}} & X_{27} & X_{28} \\
	X_{31} & {\bf X_{32}} & {\bf X_{33}} & {\bf X_{34}} & {\bf X_{35}} & {\bf X_{36}} & X_{37} & X_{38} 
	\end{bmatrix}.
	\]
	Setting $ k = 2 $ and $ l = 6 $, the submatrix $ Y_{k\,l} = Y_{2\,6} $ is given by the boldfaced entries, and the corresponding ideal $ J_{2\,6} = (f_1,f_2,\ldots,f_{10})$, where
	\begin{align*}
	&f_1 = X_{12}X_{23}X_{34}, \,f_2 = X_{12}X_{23}X_{35}, \, f_3 = X_{12}X_{23}X_{36}, \, f_4 = X_{12}X_{24}X_{35},\\
	& f_5 = X_{12}X_{24}X_{36}, f_6 = X_{12}X_{25}X_{36}, \, f_7 = X_{13}X_{24}X_{35}, \,f_8 = X_{13}X_{24}X_{36}, \\
	& f_9 = X_{13}X_{25}X_{36},\, f_{10} =  X_{14}X_{25}X_{36}.
	\end{align*}
	It can be verified by Macaulay2 \cite{M2} that
	\begin{align*}
		&(\langle f_1 \rangle : f_2) = (X_{34}), \quad\; (\langle f_1,f_2 \rangle : f_3) = (X_{34},X_{35}), \quad\; (\langle f_1,f_2,f_3 \rangle : f_4) = (X_{23}), \\
		& (\langle f_1,\ldots,f_4 \rangle : f_5) = (X_{23},X_{35}), \quad\quad (\langle f_1,\ldots,f_5 \rangle : f_6) = (X_{23},X_{24}), \\
		& (\langle f_1,\ldots,f_6 \rangle : f_7) = (X_{12}), \quad\quad (\langle f_1,\ldots,f_7 \rangle : f_8) = (X_{12},X_{35}), \\
		& (\langle f_1,\ldots,f_8 \rangle : f_9) = (X_{12},X_{24}), \quad\quad (\langle f_1,\ldots,f_9 \rangle : f_{10}) = (X_{12},X_{13}),
	\end{align*}
	which shows that $ J_{2\,6} $ has linear quotients.
\end{example}


Note that $J_{kl}$ is the ideal generated by the diagonal monomials corresponding to the maximal minors of the matrix $Y_{kl}$, thus $J_{kl} =\ini(I_{kl})$ in this case. The property ``having linear quotients" can be checked for $ J_{kl} $, equivalently, in $ K[X] $ or in $ K[Y_{kl}] $.
So in order to show that $ J_{kl} $ has linear quotients (as an ideal of $ K[X] $), it suffices to prove the following lemma.

\begin{lemma}\label{lem:lin-quo}
	Let $ \{ f_1,\ldots, f_r \} $ be the collection of all diagonal monomials of $ X $ ordered by $ \tau $, i.e., $ f_1 > \cdots > f_r $ and $ J = \langle f_1, \ldots, f_r \rangle $. Then the colon ideal $ (\langle f_1,\ldots,f_u \rangle : f_{u+1}) $ is generated by some variables for every $ 1 \le u \le r - 1 $. Indeed, if $ f_{u+1} = X_{1 c_1} X_{2 c_2} \cdots X_{m c_m} $, setting $ c_0 = 0 $, we have
	\begin{align*}
	(\langle f_1,\ldots,f_u \rangle : f_{u+1}) = \left\langle X_{1 b_1}, X_{2 b_2}, \ldots, X_{m b_m} : c_{i-1} < b_i < c_i\;\forall\; 1 \le i \le m \right\rangle.
	\end{align*}
\end{lemma}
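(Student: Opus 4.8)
The plan is to reduce the computation of the colon ideal to an elementary comparison of column-index sequences under the order $\tau$. Each $f_v$ is a squarefree monomial with exactly one variable from each row, and by the elementary facts in \ref{para:ele-facts} we have $(\langle f_1,\ldots,f_u\rangle : f_{u+1}) = \sum_{v=1}^{u}(\langle f_v \rangle : f_{u+1})$ with $(\langle f_v\rangle : f_{u+1}) = \langle f_v/\gcd(f_v,f_{u+1})\rangle$; hence the colon ideal is exactly $\langle f_v/\gcd(f_v,f_{u+1}) : 1 \le v \le u \rangle$. Writing $f_v = X_{1a_1}\cdots X_{ma_m}$ with $a_1 < \cdots < a_m$, one has $f_v/\gcd(f_v,f_{u+1}) = \prod_{i : a_i \ne c_i} X_{ia_i}$. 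The first thing I would record is a translation of the term order: since $\tau$ is the lexicographic order induced by $X_{11} > X_{12} > \cdots > X_{mn}$ and each diagonal monomial is squarefree, comparing $f_v$ with $f_{u+1}$ amounts to scanning the rows in increasing order, so $f_v > f_{u+1}$ \iff the sequence $(a_1,\ldots,a_m)$ is strictly smaller than $(c_1,\ldots,c_m)$ in the usual lexicographic order on increasing integer sequences. In particular $\{f_1,\ldots,f_u\}$ is precisely the set of diagonal monomials whose column sequence lies lexicographically below $(c_1,\ldots,c_m)$.

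With this dictionary in hand I would prove the two inclusions. For the inclusion ``$\subseteq$'', take any $v \le u$ and let $i_0$ be the first row with $a_{i_0} \ne c_{i_0}$; lexicographic minimality forces $a_{i_0} < c_{i_0}$ while $a_j = c_j$ for $j < i_0$. Because the sequence is strictly increasing, $a_{i_0} > a_{i_0 - 1} = c_{i_0 - 1}$ (interpreting $a_0 = c_0 = 0$ when $i_0 = 1$), so $X_{i_0 a_{i_0}}$ satisfies $c_{i_0 - 1} < a_{i_0} < c_{i_0}$ and is one of the claimed generators. As $X_{i_0 a_{i_0}}$ divides $f_v/\gcd(f_v,f_{u+1})$, the latter lies in $\langle X_{ib_i} : c_{i-1}<b_i<c_i\rangle$, giving the containment of the colon ideal in the asserted ideal.

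For the reverse inclusion ``$\supseteq$'', fix a candidate variable $X_{ib}$ with $c_{i-1} < b < c_i$ and exhibit a single witness $f_v$. I would form the sequence obtained from $(c_1,\ldots,c_m)$ by replacing $c_i$ with $b$; the chain $c_{i-1} < b < c_i < c_{i+1}$ guarantees it is still strictly increasing, so it defines a genuine diagonal monomial $f_v = (f_{u+1}/X_{ic_i})X_{ib}$. This $f_v$ agrees with $f_{u+1}$ in rows $1,\ldots,i-1$ and has $b < c_i$ in row $i$, so its column sequence is lexicographically below that of $f_{u+1}$, i.e.\ $f_v > f_{u+1}$ and $f_v \in \{f_1,\ldots,f_u\}$. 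Since $f_v$ divides $X_{ib}f_{u+1}$ (with quotient $X_{ic_i}$), we obtain $X_{ib} \in (\langle f_1,\ldots,f_u\rangle : f_{u+1})$. This proves the reverse inclusion, hence the stated equality, and in particular shows the colon ideal is generated by variables.

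The only genuine obstacle is pinning down the correspondence between $\tau$ and the lexicographic order on column sequences exactly (including the reversal of direction and the boundary convention $c_0 = 0$); once that is settled, both inclusions are short divisibility arguments, and the construction of the witness monomial in the ``$\supseteq$'' direction is the single nontrivial idea.
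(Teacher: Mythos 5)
Your proof is correct and takes essentially the same approach as the paper's: the inclusion $\supseteq$ uses the identical witness construction $f_v = (f_{u+1}/X_{i c_i})X_{i b}$, and the inclusion $\subseteq$ is the same reduction to the principal colons $(\langle f_v \rangle : f_{u+1})$, which you phrase via the generator $f_v/\gcd(f_v,f_{u+1})$ while the paper phrases it as divisibility by $X_{j a_j}$. Your explicit dictionary between $\tau$ and the lexicographic order on column sequences merely spells out what the paper treats as clear, so there is no substantive difference.
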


\begin{proof}
	Fix $ i $ and $ b_i $ such that $ 1 \le i \le m $ and $ c_{i-1} < b_i < c_i \; (< c_{i+1})$. Set
	\[
		g := X_{1 c_1} \cdots X_{i-1 \, c_{i-1}} X_{i b_i} X_{i+1 \, c_{i+1}} \cdots X_{m c_m}.
	\]
	Clearly, $g$ is a diagonal monomial, and $ g > f_{u+1} $, hence $ g $ is one of $ f_1,\ldots,f_u $. Therefore
	\begin{align*}
	X_{i b_i} \in \left( \< g \> : f_{u+1} \right) \subseteq (\langle f_1,\ldots,f_u \rangle : f_{u+1}).
	\end{align*}
	This proves `the containment $ \supseteq $'.
	For another containment, it is enough to show that, for every $ 1 \le v \le u $, $ (\langle f_v \rangle : f_{u+1}) \subseteq (X_{j a_j}) $ for some $ 1 \le j \le m $ and $ c_{j-1} < a_j < c_j $. Let $ f_v = X_{1 a_1} X_{2 a_2} \cdots X_{m a_m} $. Since $ f_v $ is a diagonal monomial of $ X $, we have that $ 1 \le a_1 < \cdots < a_m \le n $. Set $ a_0 := 0 $. Since $ f_v > f_{u+1} $, there exists $ 1 \le j \le m $ such that $ a_j < c_j $ and $ a_l = c_l $ for all $ 1 \le l < j $. Note that $ c_{j-1} = a_{j-1} < a_j < c_j $. It can be easily shown that any monomial in $ (\langle f_v \rangle : f_{u+1}) $ is divisible by $ X_{j a_j} $. Hence $ (\langle f_v \rangle : f_{u+1}) \subseteq (X_{j a_j}) $, which completes the proof.
\end{proof}

As an immediate consequence, we obtain the following.

\begin{corollary}\label{cor:linear-res}
	The ideal $ J_{kl} $ of $ K[X] $ has linear quotients, and hence it has a linear free resolution.
\end{corollary}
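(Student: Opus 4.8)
The plan is to read off the corollary from Lemma~\ref{lem:lin-quo}, which already carries the whole combinatorial load. Applying that lemma to the submatrix $ Y_{kl} $ in its own polynomial ring $ K[Y_{kl}] $ (the lemma only uses that the entries are distinct indeterminates and that $ \tau $ restricts to a diagonal term order, both of which hold for $ Y_{kl} $), I order all diagonal monomials of $ Y_{kl} $ as $ f_1 > \cdots > f_r $ by $ \tau $. The lemma then says that each colon ideal $ (\langle f_1,\ldots,f_u \rangle : f_{u+1}) $ is generated by variables, hence by linear forms, which is exactly the definition of linear quotients. Combined with the remark preceding the lemma---that having linear quotients for $ J_{kl} $ over $ K[Y_{kl}] $ and over $ K[X] $ are equivalent---this yields that $ J_{kl} $ has linear quotients as an ideal of $ K[X] $.

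For the second assertion, I would invoke the standard principle that a monomial ideal which is generated in a single degree and has linear quotients admits a linear free resolution (see \cite{Herzog'sBook}). The equigeneration hypothesis is immediate here: every generator $ X_{1 c_1} \cdots X_{m c_m} $ of $ J_{kl} $ is a product of exactly $ m $ distinct indeterminates, so all generators sit in degree $ m $. Hence linear quotients upgrades to an $ m $-linear free resolution, which completes the corollary.

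I do not anticipate a genuine obstacle: the delicate step---choosing the right order of generators and identifying each colon ideal as generated by single variables---has been fully discharged in Lemma~\ref{lem:lin-quo}. The only routine points requiring a word of justification are the equigeneration in degree $ m $ and the passage between $ K[Y_{kl}] $ and $ K[X] $; the latter is a flat (indeed free) polynomial extension in the remaining indeterminates, so a minimal graded free resolution of $ J_{kl} $ over $ K[Y_{kl}] $ base-changes to one over $ K[X] $ with identical graded Betti numbers, and linearity is thereby preserved.
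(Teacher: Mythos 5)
Your proposal is correct and takes essentially the same route as the paper: the paper's proof likewise consists of applying Lemma~\ref{lem:lin-quo} (to the matrix $Y_{kl}$, using the observation stated just before the lemma that linear quotients over $K[Y_{kl}]$ and over $K[X]$ are equivalent) together with the standard fact that an ideal generated in a single degree with linear quotients has a linear free resolution, for which the paper cites \cite[Lemma~4.1]{CH03} where you cite \cite{Herzog'sBook}. The details you spell out---the degree-$m$ equigeneration and the flat base change from $K[Y_{kl}]$ to $K[X]$---are exactly the routine points the paper leaves implicit.
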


\begin{proof}
	The corollary follows from Lemma~\ref{lem:lin-quo} and \cite[Lemma~4.1]{CH03}.
\end{proof}

The following proposition is technical and most crucial in our study. For better understanding, we refer the reader to Example~\ref{exam:for-big-lemma}, which illustrates the proof of the proposition.

\begin{proposition}\label{prop:powers-colon}
	Let $ 1 \le k_1 \le k_2 \le \cdots \le k_s < n $ and $ 1 < l_1 \le l_2 \le \cdots \le l_s \le n $ be such that $ k_i < l_i $ and $ l_i - k_i + 1 \ge m $. Suppose $ s \ge 2 $. Set $ J := J_{k_1 l_1}  J_{k_2 l_2} \cdots J_{k_s l_s} $, where $ J_{k_1 l_1} $ is generated by the diagonal monomials $ \{ f_1,\ldots, f_r \} $ of $ Y_{k_1 l_1} $ ordered by $ \tau $, i.e., $ f_1 > \cdots > f_r $. Then, for every $ 0 \le u \le r - 1 $,
    if $ f_{u+1} = X_{1 c_1} X_{2 c_2} \cdots X_{m c_m} $, setting $ c_0 = k_1 - 1 $, one has that
	\begin{align*}
		& (J+ \langle f_1,\ldots,f_u \rangle : f_{u+1}) = \\
		& J_{k_2 l_2} \cdots J_{k_s l_s} + \left\langle X_{1 b_1}, X_{2 b_2}, \ldots, X_{m b_m} : c_{i-1} < b_i < c_i \;\forall\; 1 \le i \le m \right\rangle.
	\end{align*}
\end{proposition}

\begin{proof}
	Set $ L := J_{k_2 l_2} \cdots J_{k_s l_s} + \left\langle X_{1 b_1}, X_{2 b_2}, \ldots, X_{m b_m} : c_{i-1} < b_i < c_i\;\forall\; 1 \le i \le m \right\rangle $. Since we are dealing with monomial ideals, by Lemma~\ref{lem:lin-quo}, we only need to prove that $(J : f_{u+1}) \subseteq L$. So we consider a monomial $ f \in (J : f_{u+1}) $. Hence, since $ f f_{u+1} $ is a monomial, it can be written as $ f f_{u+1} = g_1 g_2 \cdots g_s p $ for some diagonal monomials $ g_j \in J_{k_j l_j} $ $ (1 \le j \le s) $ and a monomial $ p $. Suppose $ g := \text{gcd}(f_{u+1}, (g_1 g_2 \cdots g_s)) = X_{i_1 c_{i_1}} X_{i_2 c_{i_2}} \cdots X_{i_k c_{i_k}} $, where $ i_1 < i_2 < \cdots < i_k $. Then $ f_{u+1} = g g' $ for some $ g' $ which must divide $ p $. We show that either
	\begin{align}
		& f \in \left\langle X_{1 b_1}, X_{2 b_2}, \ldots, X_{m b_m} : c_{i-1} < b_i < c_i,\; 1 \le i \le m \right\rangle \quad \mbox{ or} \label{eqn:f-in-vars}\\
		&g_1 g_2 \cdots g_s = h_1 h_2 \cdots h_s \mbox{ for some diagonal monomials } \label{eqn:g=h}\\
		& \quad \quad \quad \quad h_j \in J_{k_j l_j} \;( 1 \le j \le s ) \mbox{ such that $ h_1 $ is divisible by $ g $}. \nonumber
	\end{align}
	From \eqref{eqn:g=h}, it follows that $ f g g' = f f_{u+1} = g_1 g_2 \cdots g_s p = h_1 h_2 \cdots h_s p $, which yields that $ f = (h_2\cdots h_s)q $ for some monomial $ q $, and hence $ f \in J_{k_2 l_2} \cdots J_{k_s l_s} $. Thus, in both cases \eqref{eqn:f-in-vars} and \eqref{eqn:g=h}, $ f \in L $, which proves that $(J : f_{u+1}) \subseteq L$. Suppose \eqref{eqn:f-in-vars} does not hold true. It remains to prove \eqref{eqn:g=h}. 
	
	Each $ g_j $ is a product of $ m $ many variables. Draw circle around each of these variables in the matrix $ X $. We do the process for every $ 1 \le j \le s $, and every time we draw a new circle if there is a repetition of variable. So there might be more than one circle around a variable. Moreover, in each row, there are total $ s $ many circles listed as 1st, 2nd, 3rd etc., from left to right. (Possibly, $ j $th and $ (j+1) $st circles of a row are at the same point). For $ 1 \le j \le s $, we construct $ h_j $ as the product of $ m $ many variables corresponding to $ j $th circles of all rows of $ X $. It follows from the construction that $ g_1 g_2 \cdots g_s = h_1 h_2 \cdots h_s $. We make the following statements.
	
	{\it Claim 1.} The $ 1 $st circle in $ (i_l) $th row of $ X $ is at $ (i_l, c_{i_l}) $ for every $ 1 \le l \le k $.
	
	{\it Proof of Claim 1.} Since each $ X_{i_l\, c_{i_l}} $ divides one of $ g_j $, $ 1\le j \le s $, there is a circle around $ (i_l, c_{i_l}) $ for every $ 1 \le l \le k $. If possible, suppose that the $ 1 $st circle in $ (i_l) $th row is around $ (i_l,v) $ for some $ 1 \le l \le k $, where $ v < c_{i_l} $ and $ l $ is the minimum such possible number, i.e., the $ 1 $st circle in $ (i_j) $th row of $ X $ is around $ (i_j, c_{i_j}) $ for all $ 1 \le j < l $. Since $ (i_l,v) $ contains one circle, there is $ g_j $ for some $ 1 \le j \le s $ such that $ X_{i_l \, v} $ divides $ g_j $. We claim that there exists $ X_{i \, b_i} $ with $ i_{l-1} < i \le i_l $ and $ c_{i-1} < b_i < c_i $ such that $ X_{i \, b_i} $ divides $ g_j $. Assuming the claim, and observing that $ X_{i \, b_i} $ does not divide $ f_{u+1} $, it follows from $ f f_{u+1} = g_1 g_2 \cdots g_s p $ that $ X_{i \, b_i} $ divides $ f $, which contradicts the assumption that \eqref{eqn:f-in-vars} does not hold true. It remains to prove the claim. We may write
	\[
		g_j = X_{1 b_1} X_{2 b_2} \cdots X_{i_{l-1} b_{i_{l-1}}} \cdots X_{i_l b_{i_l}} \cdots X_{m b_m}
	\]
	for some $ b_i $, $ 1 \le i \le m $, where $ b_{i_l} = v $. Note that $ v < c_{i_l} $ and $ b_1 < b_2 < \cdots < b_m $. If possible, assume that $ b_i $ does not lie in between $ c_{i-1} $ and $ c_i $ for every $ i_{l-1} < i \le i_l $. Hence, since $ b_{i_l} = v < c_{i_l} $, it follows that $ b_{i_l} \le c_{i_l - 1} $. Then, starting from $ i_l $, by using backward induction, one obtains that $ b_i \le c_{i-1} $ for all $ i_{l-1} < i \le i_l $. In particular, $ b_{i_{l-1}} < b_{1 + i_{l-1}}  \le c_{i_{l-1}} $. Since $ X_{i_{l-1} b_{i_{l-1}}} $ divides $ g_j $, there is a circle at $ (i_{l-1}, b_{i_{l-1}}) $. So the $ 1 $st circle in $ (i_{l-1}) $th row of $ X $ does not lie at $ (i_{l-1}, c_{i_{l-1}}) $, which contradicts the minimality of $ l $. Therefore there is some $ i $ with $ i_{l-1} < i \le i_l $ for which $ b_i $ is lying in between $ c_{i-1} $ and $ c_i $. This completes the proof of Claim~1.
	
	{\it Claim 2.} For every $ 1 \le j \le s $ and every $ 1 \le i \le m - 1 $, the $ j $th circle in $ i $th row is on the left of the $ j $th circle in $ (i+1) $st row.
	
	{\it Proof of Claim 2.} If possible, suppose the $ j $th circle in some $ i $th row is either on the same column of the $ j $th circle in $ (i+1) $st row or on the right side of it. Consider $ g_v $ corresponding to each circle of the first $ j $ many circles in $ (i+1) $st row. Each such $ g_v $ corresponds to one circle in $ i $th row which is situated on the left of the $ j $th circle of same row. Therefore, in $ i $th row, there are at least $ j $ many circles before the $ j $th circle, which is a contradiction. This completes the proof of Claim~2.
	
	{\it Claim 3.} The monomial $ h_j \in J_{k_j l_j} $ for every $ 1 \le j \le s $.
	
	{\it Proof of Claim 3.} For every $ 1 \le j \le s $, we set
	\begin{align*}
		\mathcal{A}_j & := \left\{ (i,v) : 1 \le i \le m \mbox{ and } k_j \le v \le l_j \right\},\\
		\mathcal{B}_j & := \left\{ (i,v) : 1 \le i \le m \mbox{ and } 1 \le v \le l_j \right\} \quad \mbox{and}\\
		\mathcal{C}_j & := \left\{ (i,v) : 1 \le i \le m \mbox{ and } k_j \le v \le n \right\}.
	\end{align*}
	It follows from the constructions of $ \mathcal{A}_j $, $ \mathcal{B}_j $ and $ \mathcal{C}_j $ that
	\begin{equation}\label{eqn:sets-relation-A-B-C}
		\mathcal{A}_j = \mathcal{B}_j \cap \mathcal{C}_j; \quad \mathcal{B}_1 \subseteq \mathcal{B}_2 \subseteq \cdots \subseteq \mathcal{B}_s \quad \mbox{and} \quad \mathcal{C}_1 \supseteq \mathcal{C}_2 \supseteq \cdots \supseteq \mathcal{C}_s.
	\end{equation}
	Moreover, it can be observed that $ (i,v) \in \mathcal{A}_j $ if and only if the corresponding entry $ X_{iv} $ of $ X $ belongs to the submatrix $ Y_{k_j l_j} $. Therefore, for every $ 1 \le j \le s $,
	\begin{equation}\label{state-circles-gj-in-Aj}
		\mbox{the circles correspond to $ g_j \in J_{k_j l_j} $ are placed in $ \mathcal{A}_j \; (= \mathcal{B}_j \cap \mathcal{C}_j ) $}.
	\end{equation}
	Since $ h_j $ is a diagonal monomial of $ X $ (by Claim~2), in view of the construction of $ h_j $, it is enough to show that
	\begin{equation}\label{state-jth-cir-Aj}
		\mbox{the $ j $th circle of every row of $ X $ is placed in $ \mathcal{A}_j $}.
	\end{equation}
	In view of \eqref{eqn:sets-relation-A-B-C} and \eqref{state-circles-gj-in-Aj}, we obtain that the circles correspond to $ g_1, g_2, \ldots, g_j $ belong into $ \mathcal{B}_j $. Thus, in every row of $ X $, at least $ j $ many circles are there in $ \mathcal{B}_j $. Therefore the $ j $th circle of every row of $ X $ must be placed in $ \mathcal{B}_j $. Hence, in order to prove \eqref{state-jth-cir-Aj}, it remains to show that the $ j $th circle of every row of $ X $ is in $ \mathcal{C}_j $. If possible, suppose that the $ j $th circle of some $ i $th row does not belong into $ \mathcal{C}_j $. It follows that the first $ j $ circles of the $ i $th row do not belong into $ \mathcal{C}_j $. So at most $ (s - j) $ many circles of the $ i $th row can be there in $ \mathcal{C}_j $. On the other hand, in view of \eqref{eqn:sets-relation-A-B-C} and \eqref{state-circles-gj-in-Aj}, since the circles correspond to $ g_j, g_{j+1}, \ldots, g_s $ are placed in $ \mathcal{C}_j $, at least $ (s - j + 1) $ many circles of the $ i $th row are there in $ \mathcal{C}_j $, which is a contradiction. This completes the proof of \eqref{state-jth-cir-Aj}, and hence Claim~3.
	
	By Claim~3, one obtains that each $ h_j \in J_{k_j l_j} $ $ (1 \le j \le s) $ is a diagonal monomial; while Claim~1 yields that $ h_1 $ is divisible by $ g = X_{i_1 c_{i_1}} X_{i_2 c_{i_2}} \cdots X_{i_k c_{i_k}} $. This completes the proof of \eqref{eqn:g=h}, and hence the proposition.
\end{proof}

We now give an example describing the procedure that we follow in the proof of Proposition~\ref{prop:powers-colon}.

\begin{example}\label{exam:for-big-lemma}
 Let $X$ be a $6 \times 16$ matrix of distinct indeterminates over a field $K$. We describe the matrix $ X $ by the figure below, where the horizontal lines represent the rows of $ X $, the vertical lines represent the columns of $ X $, and the entries of $ X $ are situated on the intersections of horizontal and vertical lines. We follow the notations used in the proof of Proposition~\ref{prop:powers-colon}. 
 Set $(k_1,l_1)=(1,12)$, $(k_2,l_2)=(3,13)$, $(k_3,l_3)=(7,15)$, $(k_4,l_4)=(9,16)$ and $(k_5,l_5)=(10,16)$, which correspond five submatrices $ Y_{k_j l_j} $ of $ X $ and five ideals $ J_{k_j l_j} $ of $ K[X] $ for $ 1 \le j \le 5 $ respectively.
 
 \begin{tikzpicture}[scale=0.75]
\draw (1,1)-- (2,1);
\draw (2,1)-- (3,1);
\draw (3,1)-- (4,1);
\draw (4,1)-- (5,1);
\draw (5,1)-- (6,1);
\draw (6,1)-- (7,1);
\draw (7,1)-- (8,1);
\draw (8,1)-- (9,1);
\draw (9,1)-- (10,1);
\draw (10,1)-- (11,1);
\draw (11,1)-- (12,1);
\draw (12,1)-- (13,1);
\draw (13,1)-- (14,1);
\draw (14,1)-- (15,1);
\draw (1,1)-- (1,2);
\draw (2,2)-- (2,1);
\draw (1,2)-- (2,2);
\draw (3,2)-- (3,1);
\draw (4,2)-- (4,1);
\draw (5,2)-- (5,1);
\draw (6,2)-- (6,1);
\draw (7,2)-- (7,1);
\draw (8,2)-- (8,1);
\draw (9,2)-- (9,1);
\draw (10,2)-- (10,1);
\draw (11,2)-- (11,1);
\draw (12,2)-- (12,1);
\draw (13,2)-- (13,1);
\draw (14,2)-- (14,1);
\draw (15,2)-- (15,1);
\draw (2,2)-- (3,2);
\draw (3,2)-- (4,2);
\draw (4,2)-- (5,2);
\draw (5,2)-- (6,2);
\draw (6,2)-- (7,2);
\draw (7,2)-- (8,2);
\draw (8,2)-- (9,2);
\draw (9,2)-- (10,2);
\draw (10,2)-- (11,2);
\draw (11,2)-- (12,2);
\draw (12,2)-- (13,2);
\draw (13,2)-- (14,2);
\draw (14,2)-- (15,2);
\draw (14,3)-- (14,2);
\draw (15,3)-- (15,2);
\draw (13,3)-- (13,2);
\draw (12,3)-- (12,2);
\draw (11,3)-- (11,2);
\draw (10,3)-- (10,2);
\draw (9,3)-- (9,2);
\draw (8,3)-- (8,2);
\draw (7,3)-- (7,2);
\draw (6,3)-- (6,2);
\draw (5,3)-- (5,2);
\draw (4,3)-- (4,2);
\draw (3,3)-- (3,2);
\draw (2,3)-- (2,2);
\draw (1,3)-- (1,2);
\draw (1,4)-- (1,3);
\draw (1,3)-- (2,3);
\draw (2,4)-- (2,3);
\draw (2,3)-- (3,3);
\draw (3,4)-- (3,3);
\draw (3,3)-- (4,3);
\draw (4,4)-- (4,3);
\draw (4,3)-- (5,3);
\draw (5,4)-- (5,3);
\draw (6,4)-- (6,3);
\draw (5,3)-- (6,3);
\draw (7,4)-- (7,3);
\draw (6,3)-- (7,3);
\draw (8,4)-- (8,3);
\draw (7,3)-- (8,3);
\draw (9,4)-- (9,3);
\draw (8,3)-- (9,3);
\draw (10,4)-- (10,3);
\draw (9,3)-- (10,3);
\draw (11,4)-- (11,3);
\draw (10,3)-- (11,3);
\draw (12,4)-- (12,3);
\draw (11,3)-- (12,3);
\draw (13,4)-- (13,3);
\draw (12,3)-- (13,3);
\draw (14,4)-- (14,3);
\draw (13,3)-- (14,3);
\draw (15,4)-- (15,3);
\draw (14,3)-- (15,3);
\draw (1,5)-- (1,4);
\draw (2,5)-- (2,4);
\draw (1,4)-- (2,4);
\draw (3,5)-- (3,4);
\draw (2,4)-- (3,4);
\draw (4,5)-- (4,4);
\draw (3,4)-- (4,4);
\draw (5,5)-- (5,4);
\draw (4,4)-- (5,4);
\draw (6,5)-- (6,4);
\draw (5,4)-- (6,4);
\draw (7,5)-- (7,4);
\draw (6,4)-- (7,4);
\draw (8,5)-- (8,4);
\draw (7,4)-- (8,4);
\draw (9,5)-- (9,4);
\draw (8,4)-- (9,4);
\draw (10,5)-- (10,4);
\draw (9,4)-- (10,4);
\draw (11,5)-- (11,4);
\draw (10,4)-- (11,4);
\draw (12,5)-- (12,4);
\draw (11,4)-- (12,4);
\draw (13,5)-- (13,4);
\draw (12,4)-- (13,4);
\draw (14,5)-- (14,4);
\draw (13,4)-- (14,4);
\draw (13,5)-- (14,5);
\draw (15,5)-- (15,4);
\draw (14,4)-- (15,4);
\draw (14,5)-- (15,5);
\draw (12,5)-- (13,5);
\draw (11,5)-- (12,5);
\draw (10,5)-- (11,5);
\draw (9,5)-- (10,5);
\draw (8,5)-- (9,5);
\draw (7,5)-- (8,5);
\draw (6,5)-- (7,5);
\draw (5,5)-- (6,5);
\draw (4,5)-- (5,5);
\draw (3,5)-- (4,5);
\draw (2,5)-- (3,5);
\draw (1,5)-- (2,5);
\draw (16,1)-- (15,1);
\draw (16,2)-- (15,2);
\draw (16,3)-- (15,3);
\draw (16,4)-- (15,4);
\draw (16,5)-- (15,5);
\draw (16,6)-- (15,6);
\draw (14,6)-- (14,5);
\draw (13,6)-- (13,5);
\draw (12,6)-- (12,5);
\draw (11,6)-- (11,5);
\draw (10,6)-- (10,5);
\draw (9,6)-- (9,5);
\draw (8,6)-- (8,5);
\draw (7,6)-- (7,5);
\draw (6,6)-- (6,5);
\draw (5,6)-- (5,5);
\draw (4,6)-- (4,5);
\draw (3,6)-- (3,5);
\draw (2,6)-- (2,5);
\draw (1,6)-- (1,5);
\draw (1,6)-- (2,6);
\draw (2,6)-- (3,6);
\draw (3,6)-- (4,6);
\draw (4,6)-- (5,6);
\draw (5,6)-- (6,6);
\draw (6,6)-- (7,6);
\draw (7,6)-- (8,6);
\draw (8,6)-- (9,6);
\draw (9,6)-- (10,6);
\draw (10,6)-- (11,6);
\draw (11,6)-- (12,6);
\draw (12,6)-- (13,6);
\draw (13,6)-- (14,6);
\draw (14,6)-- (15,6);
\draw (15,6)-- (15,5);
\draw (16,6)-- (16,5);
\draw (16,5)-- (16,4);
\draw (16,4)-- (16,3);
\draw (16,3)-- (16,2);
\draw (16,2)-- (16,1);
\draw(3,6) circle (0.3cm);
\draw(4,6) circle (0.3cm);
\draw(7,6) circle (0.3cm);
\draw(9,6) circle (0.3cm);
\draw(10,6) circle (0.3cm);
\draw(5,5) circle (0.3cm);
\draw(6,5) circle (0.3cm);
\draw(8,5) circle (0.3cm);
\draw(10.93,4.97) circle (0.3cm);
\draw(10.93,4.97) circle (0.43cm);
\draw(5.97,3.97) circle (0.3cm);
\draw(7,4) circle (0.3cm);
\draw(9,4) circle (0.3cm);
\draw(12,4) circle (0.3cm);
\draw(13,4) circle (0.3cm);
\draw(8,3) circle (0.3cm);
\draw(9,3) circle (0.3cm);
\draw(11,3) circle (0.3cm);
\draw(13,3) circle (0.3cm);
\draw(14,3) circle (0.3cm);
\draw(9.97,1.99) circle (0.43cm);
\draw(9.97,1.99) circle (0.3cm);
\draw(13,2) circle (0.3cm);
\draw(14,2) circle (0.3cm);
\draw(15,2) circle (0.3cm);
\draw(10.9,0.94) circle (0.3cm);
\draw(12,1) circle (0.3cm);
\draw(15,1) circle (0.3cm);
\draw(15.9,0.98) circle (0.43cm);
\draw(15.9,0.98) circle (0.3cm);
\draw (3.6,6.34) node[anchor=north west] {$ \bigstar$};
\draw (4.65,5.4) node[anchor=north west] {$ \bigstar$};
\draw (5.57,4.4) node[anchor=north west] {$ \bigstar$};
\draw (7.6,3.38) node[anchor=north west] {$\bigstar$};
\draw (11.6,1.35) node[anchor=north west] {$\bigstar$};
\draw (9.45, 2.35) node[anchor=north west] {$\bigstar$};
\begin{scriptsize}
\fill [color=black] (1,1) circle (1.5pt);
\draw[color=black] (0.63,1.21) node {$6$};
\fill [color=black] (2,1) circle (1.5pt);
\fill [color=black] (3,1) circle (1.5pt);
\fill [color=black] (4,1) circle (1.5pt);
\fill [color=black] (5,1) circle (1.5pt);
\fill [color=black] (6,1) circle (1.5pt);
\fill [color=black] (7,1) circle (1.5pt);
\fill [color=black] (8,1) circle (1.5pt);
\fill [color=black] (9,1) circle (1.5pt);
\fill [color=black] (10,1) circle (1.5pt);
\draw [color=black] (11,1) ++(-3.5pt,0 pt) -- ++(3.5pt,3.5pt)--++(3.5pt,-3.5pt)--++(-3.5pt,-3.5pt)--++(-3.5pt,3.5pt);
\fill [color=black] (12,1) circle (1.5pt);
\fill [color=black] (13,1) circle (1.5pt);
\fill [color=black] (14,1) circle (1.5pt);
\draw [color=black] (15,1)-- ++(-3.5pt,-3.5pt) -- ++(7.0pt,7.0pt) ++(-7.0pt,0) -- ++(7.0pt,-7.0pt);
\fill [color=black] (1,2) circle (1.5pt);
\draw[color=black] (0.67,2.23) node {$5$};
\fill [color=black] (2,2) circle (1.5pt);
\fill [color=black] (3,2) circle (1.5pt);
\fill [color=black] (4,2) circle (1.5pt);
\fill [color=black] (5,2) circle (1.5pt);
\fill [color=black] (6,2) circle (1.5pt);
\fill [color=black] (7,2) circle (1.5pt);
\fill [color=black] (8,2) ++(-4.5pt,0 pt) -- ++(4.5pt,4.5pt)--++(4.5pt,-4.5pt)--++(-4.5pt,-4.5pt)--++(-4.5pt,4.5pt);
\fill [color=black] (9,2) circle (1.5pt);
\draw [color=black] (10,2) ++(-3.5pt,0 pt) -- ++(3.5pt,3.5pt)--++(3.5pt,-3.5pt)--++(-3.5pt,-3.5pt)--++(-3.5pt,3.5pt);
\fill [color=black] (11,2) circle (1.5pt);
\fill [color=black] (12,2) circle (1.5pt);
\draw [color=black] (13,2)-- ++(-3.5pt,-3.5pt) -- ++(7.0pt,7.0pt) ++(-7.0pt,0) -- ++(7.0pt,-7.0pt);
\fill [color=black,shift={(14,2)}] (0,0) ++(0 pt,5.25pt) -- ++(4.55pt,-7.875pt)--++(-9.09pt,0 pt) -- ++(4.55pt,7.875pt);
\fill [color=black] (15,2) circle (3.5pt);
\fill [color=black] (14,3) circle (3.5pt);
\fill [color=black] (15,3) circle (1.5pt);
\fill [color=black,shift={(13,3)}] (0,0) ++(0 pt,5.25pt) -- ++(4.55pt,-7.875pt)--++(-9.09pt,0 pt) -- ++(4.55pt,7.875pt);
\fill [color=black] (12,3) circle (1.5pt);
\draw [color=black] (11,3)-- ++(-3.5pt,-3.5pt) -- ++(7.0pt,7.0pt) ++(-7.0pt,0) -- ++(7.0pt,-7.0pt);
\fill [color=black] (10,3) circle (1.5pt);
\draw [color=black] (9,3) ++(-3.5pt,0 pt) -- ++(3.5pt,3.5pt)--++(3.5pt,-3.5pt)--++(-3.5pt,-3.5pt)--++(-3.5pt,3.5pt);
\fill [color=black] (8,3) circle (1.5pt);
\fill [color=black] (7,3) ++(-4.5pt,0 pt) -- ++(4.5pt,4.5pt)--++(4.5pt,-4.5pt)--++(-4.5pt,-4.5pt)--++(-4.5pt,4.5pt);
\fill [color=black] (6,3) circle (1.5pt);
\fill [color=black] (5,3) circle (1.5pt);
\fill [color=black] (4,3) circle (1.5pt);
\fill [color=black] (3,3) circle (1.5pt);
\fill [color=black] (2,3) circle (1.5pt);
\fill [color=black] (1,3) circle (1.5pt);
\draw[color=black] (0.67,3.27) node {$4$};
\fill [color=black] (1,4) circle (1.5pt);
\draw[color=black] (0.72,4.2) node {$3$};
\fill [color=black] (2,4) circle (1.5pt);
\fill [color=black] (3,4) circle (1.5pt);
\fill [color=black] (4,4) circle (1.5pt);
\fill [color=black] (5,4) circle (1.5pt);
\fill [color=black] (6,4) circle (1.5pt);
\draw [color=black] (7,4) ++(-3.5pt,0 pt) -- ++(3.5pt,3.5pt)--++(3.5pt,-3.5pt)--++(-3.5pt,-3.5pt)--++(-3.5pt,3.5pt);
\fill [color=black] (8,4) circle (1.5pt);
\draw [color=black] (9,4)-- ++(-3.5pt,-3.5pt) -- ++(7.0pt,7.0pt) ++(-7.0pt,0) -- ++(7.0pt,-7.0pt);
\fill [color=black] (10,4) circle (1.5pt);
\fill [color=black] (11,4) circle (1.5pt);
\fill [color=black,shift={(12,4)}] (0,0) ++(0 pt,5.25pt) -- ++(4.55pt,-7.875pt)--++(-9.09pt,0 pt) -- ++(4.55pt,7.875pt);
\fill [color=black] (13,4) circle (3.5pt);
\fill [color=black] (14,4) circle (1.5pt);
\fill [color=black] (15,4) circle (1.5pt);
\fill [color=black] (1,5) circle (1.5pt);
\draw[color=black] (0.68,5.09) node {$2$};
\draw[color=black] (0.68,6.09) node {$1$};
\fill [color=black] (2,5) circle (1.5pt);
\fill [color=black] (3,5) ++(-4.5pt,0 pt) -- ++(4.5pt,4.5pt)--++(4.5pt,-4.5pt)--++(-4.5pt,-4.5pt)--++(-4.5pt,4.5pt);
\fill [color=black] (4,5) circle (1.5pt);
\fill [color=black] (5,5) circle (1.5pt);
\draw [color=black] (6,5) ++(-3.5pt,0 pt) -- ++(3.5pt,3.5pt)--++(3.5pt,-3.5pt)--++(-3.5pt,-3.5pt)--++(-3.5pt,3.5pt);
\fill [color=black] (7,5) circle (1.5pt);
\draw [color=black] (8,5)-- ++(-3.5pt,-3.5pt) -- ++(7.0pt,7.0pt) ++(-7.0pt,0) -- ++(7.0pt,-7.0pt);
\fill [color=black] (9,5) circle (1.5pt);
\fill [color=black] (10,5) circle (1.5pt);
\fill [color=black] (11,5) circle (3.5pt);
\fill [color=black] (12,5) circle (1.5pt);
\fill [color=black] (13,5) circle (1.5pt);
\fill [color=black] (14,5) circle (1.5pt);
\fill [color=black] (15,5) circle (1.5pt);
\fill [color=black] (16,1) circle (3.5pt);
\fill [color=black] (16,2) circle (1.5pt);
\fill [color=black] (16,3) circle (1.5pt);
\fill [color=black] (16,4) circle (1.5pt);
\fill [color=black] (16,5) circle (1.5pt);
\fill [color=black] (16,6) circle (1.5pt);
\draw[color=black] (16.2,6.23) node {$16$};
\fill [color=black] (15,6) circle (1.5pt);
\draw[color=black] (15.2,6.23) node {$15$};
\fill [color=black] (14,6) circle (1.5pt);
\draw[color=black] (14.2,6.23) node {$14$};
\fill [color=black] (13,6) circle (1.5pt);
\draw[color=black] (13.2,6.23) node {$13$};
\fill [color=black] (12,6) circle (1.5pt);
\draw[color=black] (12.2,6.23) node {$12$};
\fill [color=black] (11,6) circle (1.5pt);
\draw[color=black] (11.2,6.23) node {$11$};
\fill [color=black,shift={(10,6)}] (0,0) ++(0 pt,5.25pt) -- ++(4.55pt,-7.875pt)--++(-9.09pt,0 pt) -- ++(4.55pt,7.875pt);
\draw[color=black] (10.5,6.38) node {$10$};
\fill [color=black] (9,6) circle (3.5pt);
\draw[color=black] (9.4,6.38) node {$9$};
\fill [color=black] (8,6) circle (1.5pt);
\draw[color=black] (8.2,6.23) node {$8$};
\draw [color=black] (7,6)-- ++(-3.5pt,-3.5pt) -- ++(7.0pt,7.0pt) ++(-7.0pt,0) -- ++(7.0pt,-7.0pt);
\draw[color=black] (7.4,6.38) node {$7$};
\fill [color=black] (6,6) circle (1.5pt);
\draw[color=black] (6.27,6.23) node {$6$};
\fill [color=black] (5,6) circle (1.5pt);
\draw[color=black] (5.25,6.23) node {$5$};
\draw[color=black] (4.4,6.38) node {$4$};
\fill [color=black] (5,6) circle (1.5pt);
\draw [color=black] (3,6) ++(-3.5pt,0 pt) -- ++(3.5pt,3.5pt)--++(3.5pt,-3.5pt)--++(-3.5pt,-3.5pt)--++(-3.5pt,3.5pt);
\draw[color=black] (3.4,6.38) node {$3$};
\fill [color=black] (2,6) circle (1.5pt);
\draw[color=black] (2.28,6.23) node {$2$};
\fill [color=black] (1,6) ++(-4.5pt,0 pt) -- ++(4.5pt,4.5pt)--++(4.5pt,-4.5pt)--++(-4.5pt,-4.5pt)--++(-4.5pt,4.5pt);
\draw[color=black] (1.2,6.4) node {$1$};
\fill [color=black,shift={(15.83,0.95)}] (0,0) ++(0 pt,5.25pt) -- ++(4.55pt,-7.875pt)--++(-9.09pt,0 pt) -- ++(4.55pt,7.875pt);
\fill [color=black,shift={(10.87,4.92)}] (0,0) ++(0 pt,5.25pt) -- ++(4.55pt,-7.875pt)--++(-9.09pt,0 pt) -- ++(4.55pt,7.875pt);
\fill [color=black] (5.87,3.88) ++(-4.5pt,0 pt) -- ++(4.5pt,4.5pt)--++(4.5pt,-4.5pt)--++(-4.5pt,-4.5pt)--++(-4.5pt,4.5pt);
\fill [color=black] (10.83,0.89) ++(-4.5pt,0 pt) -- ++(4.5pt,4.5pt)--++(4.5pt,-4.5pt)--++(-4.5pt,-4.5pt)--++(-4.5pt,4.5pt);
\fill [color=black] (9.9,1.99) circle (1.5pt);
\end{scriptsize}
\end{tikzpicture}

Following the proof of Proposition~\ref{prop:powers-colon}, let $f_{u+1}$, $g_1,~g_2,~g_3,~g_4$ and $g_5$ denote the diagonal monomials which are the product of entries marked as $\blacklozenge$, {$\bigstar$}, {\Large$\diamond$}, $\times$, $\bullet$ and 
$\blacktriangle$ respectively, i.e.,
\begin{align*}
f_{u+1} &= X_{11}X_{23}X_{36}X_{47}X_{58}X_{6\,11}  \hspace*{1.32cm}  g_1=X_{14}X_{25}X_{36}X_{48}X_{5\,10}X_{6\,12} \\
g_2&=X_{13}X_{26}X_{37}X_{49}X_{5\,10}X_{6\,11} \hspace*{1.2cm} 
g_3=X_{17}X_{28}X_{39}X_{4\,11}X_{5\,13}X_{6\,15} \\ 
g_4&=X_{19}X_{2\,11}X_{3\,13}X_{4\,14}X_{5\,15}X_{6\,16} \hspace*{0.7cm} 
g_5=X_{1\,10}X_{2\,11}X_{3\,12}X_{4\,13}X_{5\,14}X_{6\,16}.
\end{align*}
Set $ g := \text{gcd}(f_{u+1}, (g_1 g_2g_3g_4 g_5)) = X_{3 6} X_{6\,11} $. Since $ g $ does not divide $ g_1 $, we rearrange $ g_1,\ldots,g_5 $ as follows. Draw circle around each of the symbols {$\bigstar$}, {\Large$\diamond$}, $\times$, $\bullet$ and $\blacktriangle$ correspond to $ g_j $ for all $ 1 \le j \le 5 $. Note that there are more than one circles around the entries $ X_{2\,11} $, $ X_{5\,10} $ and $ X_{6\,16} $. In each row, there are total $ 5 $ circles listed as 1st, 2nd, 3rd etc., from left to right. For every $1 \le j \le 5$, let $h_j$ be the product of six many variables corresponding to $ j $th circles of all six rows of $X$, i.e.,
\begin{align*}
h_1&=X_{13}X_{25}X_{36}X_{48}X_{5\,10}X_{6\,11}  \hspace*{1.35cm}  h_2=X_{14}X_{26}X_{37}X_{49}X_{5\,10}X_{6\,12} \\
h_3&=X_{17}X_{28}X_{39}X_{4\,11}X_{5\,13}X_{6\,15} \hspace*{1.1cm} 
h_4=X_{19}X_{2\,11}X_{3\,12}X_{4\,13}X_{5\,14}X_{6\,16} \\  h_5&=X_{1\,10}X_{2\,11}X_{3\,13}X_{4\,14}X_{5\,15}X_{6\,16}.
\end{align*}
Clearly, $ g_1\cdots g_5 = h_1\cdots h_5 $. The first circles of 3rd and 6th rows of $ X $ are at $ (3,6) $ and $ (6,11) $ respectively as shown in {\it Claim~1}. Hence $ g = X_{3 6} X_{6\,11} $ divides $ h_1 $. Moreover, $ h_j $ ($ 1 \le j \le 5 $) are diagonal monomials of $ X $ as shown in {\it Claim~2}. Also $ h_1 \in J_{1\,12}$, $h_2 \in J_{3\,13}$, $h_3 \in J_{7\,15}$, $h_4 \in J_{9\,16}$ and $h_5 \in J_{10\,16}$ as shown in {\it Claim~3}.
\end{example}

In Proposition~\ref{prop:powers-colon}, it is important that the order of the ideals in the product and that the $k_i$ and the $l_i$ are in increasing order.

\begin{remark}\label{rmk:colon1}
    Let $ X $ be a $ 3 \times 9 $ generic matrix.
    \begin{enumerate}[(1)]
    \item 
    Consider the ideals $ J_{15} $ and $ J_{37} $ in $K[X]$. In this case,
    \begin{center}
        $ J_{37} = (X_{13} X_{24} X_{35}, X_{13} X_{24} X_{36}, \ldots) $ and $ (J_{37} J_{15} : X_{13} X_{24} X_{35}) \neq J_{15} $.
    \end{center}
    It ensures that Proposition~\ref{prop:powers-colon} is not necessarily true for different order of the ideals $ J_{k_1 l_1},  J_{k_2 l_2}, \ldots, J_{k_s l_s} $.
    \item 
    Consider $J_{28}$ and $J_{37}$. Writing
	\[
		J_{28} = (X_{12} X_{23} X_{34}, X_{12} X_{23} X_{35}, \ldots, X_{14} X_{25} X_{38}, X_{14} X_{26} X_{37},\ldots,X_{16} X_{27} X_{38}),
	\]
	we note that
	\[
		(J_{28} J_{37}, X_{12} X_{23} X_{34}, \ldots,X_{14} X_{25} X_{38}  : X_{14} X_{26} X_{37}) \neq J_{37} + (X_{12}, X_{13}, X_{25}).
	\]
    Thus, in Proposition~\ref{prop:powers-colon}, the orders $ k_1 \le k_2 \le \cdots \le k_s $ and $ l_1 \le l_2 \le \cdots \le l_s $ cannot be omitted.
    \end{enumerate}
\end{remark}

We recall the following well-known facts for later use.

\begin{remark}
	Let $ I $ be a homogeneous ideal of $ K[X] $. Then
	\begin{enumerate}[(i)]
		\item $ \reg(I) = \reg(K[X]/I) + 1 $.
		\item Suppose $ I $ is generated by homogeneous elements all of degree $ d $. Then $ \reg(I) \ge d $, and equality occurs if and only if $ I $  has a linear free resolution.
		\item If $ I $ is generated by some variables, then $ \reg(K[X]/I) = 0 $.
	\end{enumerate}
\end{remark}

Now we are in a position to prove our main result.

\begin{proof}[Proof of Theorem~\ref{thm:J-has-lin-free-reso}]
	We prove the assertion by induction on $ s $. By virtue of Corollary \ref{cor:linear-res}, $ J_{k_1 l_1} $ has a linear free resolution. Thus, if $ s = 1 $, then there is nothing to prove. So we may assume that $ s \ge 2 $, and $ J' := J_{k_2 l_2} J_{k_3 l_3} \cdots J_{k_s l_s} $ has a linear free resolution, i.e., $ \reg(J') = (s-1)m $. Setting $ J_{k_1 l_1} = (f_1,\ldots,f_r) $ as in Proposition~\ref{prop:powers-colon}, we consider the following short exact sequences:
	\begin{align*}
	0 \longrightarrow \dfrac{K[X]}{(J : f_1)}(-m) \stackrel{f_1}{\longrightarrow} & \dfrac{K[X]}{J} \longrightarrow	\dfrac{K[X]}{(J , f_1)} \longrightarrow 0, \\
	0 \longrightarrow \dfrac{K[X]}{(\langle J,f_1\rangle : f_2)}(-m)	\stackrel{f_2}{\longrightarrow} & \dfrac{K[X]}{(J , f_1)} \longrightarrow	\dfrac{K[X]}{(J , f_1, f_2)} \longrightarrow 0, \\
	& \vdots \\
	0 \longrightarrow \dfrac{K[X]}{(\langle J,	f_1,\ldots,f_{r-1}\rangle : f_r)}(-m)	\stackrel{f_r}{\longrightarrow}	& \dfrac{K[X]}{(J, f_1,\ldots,f_{r-1})} \longrightarrow \dfrac{K[X]}{J_{k_1 l_1}} \longrightarrow 0.
	\end{align*}
 It follows from these short exact sequences that
 \begin{align*}
 	& \reg\left( K[X]/J \right) \le \\
 	& \max \left\{ \reg\left(\dfrac{K[X]}{(J :f_1)}\right)+m, \ldots, 
 	\reg\left(\dfrac{K[X]}{(\langle J,f_1,\ldots,f_{r-1}\rangle :f_r)}\right) + m,~ 
 	\reg\left(\dfrac{K[X]}{J_{k_1 l_1}}\right) \right\}.
 \end{align*}
By Corollary~\ref{cor:linear-res}, one obtains that $ \reg\left( K[X]/J_{k_1 l_1} \right) = m - 1 $. Moreover, for every $ 1 \le i \le r $, we have
\begin{align*}
	& \reg\left( \dfrac{K[X]}{(\langle J,f_1,\ldots,f_{i-1} \rangle : f_i)} \right) + m \\
	= & \;\reg\left( \dfrac{K[X]}{\langle J'+\text{(some variables)} \rangle } \right) + m  \quad \mbox{[by Proposition~\ref{prop:powers-colon}]} \\
	\le & \;\reg\left( K[X]/J' \right) + m \quad \quad \mbox{[by \cite[Corollary~3.2(b)]{Her07}]} \\
	= & \;  sm - 1 \quad \mbox{[being $ \reg\left( K[X]/J' \right) = \reg(J') - 1 = (s-1)m - 1 $]}.
\end{align*}
Thus it follows from the above inequalities that $ \reg(K[X]/J) \le sm - 1 $, i.e., $ \reg(J) \le sm $, and hence $ \reg(J) = sm $. So $ J $ has a linear free resolution.
%
\end{proof}

%
\bibliographystyle{abbrv}  
\bibliography{refs_reg}
\end{document}